\newtheorem{theorem}{Theorem}
\newtheorem{lemma}[theorem]{Lemma}
\theoremstyle{definition}
\newtheorem{remark}{Remark}
\newtheorem*{thm}{Theorem~{A}}
\newtheorem*{thmb}{Theorem~{B}}
\def\nn{\mathbb{N}}
\def\cc{\mathbb{C}}
\def\gb{\mathfrak{B}}
\def\n0{{\nn\cup\{0\}}}
\def\vp{\varphi}
\markboth{\today}{\today}
\begin{document}
\hfill\texttt{\jobname.tex}\qquad\today

\bigskip
\title[Remarks on the mixed joint universality]
{Remarks on the mixed joint universality \\ for a class of zeta-functions}

\author{Roma Ka{\v c}inskait{\.e}}

\address{R. Ka{\v c}inskait{\.e} \\
Department of Mathematics, {\v S}iauliai University, Vi\-{\v s}ins\-kio~19, LT-77156 {\v S}iauliai, Lithuania\\
Department of Mathematics and Statistics, Vytautas Magnus University, Kaunas, Vileikos 8, LT-44404, Lithuania}
\email{r.kacinskaite@fm.su.lt, r.kacinskaite@if.vdu.lt}

\author{Kohji Matsumoto}

\address{K. Matsumoto, Graduate School of Mathematics, Nagoya University, Chikusa-ku,
Nagoya 464-8602, Japan}
\email{kohjimat@math.nagoya-u.ac.jp}
\date{}

\maketitle

\begin{abstract}
Two remarks related with
the mixed joint universality for a polynomial Euler pro\-duct $\vp(s)$ and a
periodic Hurwitz zeta-function $\zeta(s,\alpha;\gb)$, when $\alpha$ is a
transcendental pa\-ra\-me\-ter, are given.    One is the mixed joint functional independence,
and the other is a generalized universality, which includes several
periodic Hurwitz zeta-functions.
\end{abstract}

{\small{Keywords: {functional independence, periodic  Hurwitz zeta-function, polynomial Euler pro\-duct, universality.}}}

{\small{AMS classification:} 11M41, 11M35.}

\section{Introduction}

In our former paper \cite{RK-KM-15}, we have shown a certain mixed joint universality theorem,
which is valid for an Euler product of rather general form, and a periodic Hurwitz
zeta-function.

In the present note, we give two remarks related with the result in \cite{RK-KM-15}.
The first remark is on the mixed joint functional independence result.
It is well known that functional independence properties can be deduced from universality
results.    We will show that such a functional independence is also valid in our present
mixed joint situation.

The second remark is on a generalization of the result in \cite{RK-KM-15}.
It is an important problem to study how general the mixed joint universality property holds.
We will prove a ge\-ne\-ra\-li\-zed limit theorem and a generalized
universality theorem, which involve several periodic Hurwitz
zeta-functions, under a certain matrix condition.

\section{Functional independence}

The history of the problem on the functional independence of Dirichlet series
goes back to the famous lecture of D.~Hilbert \cite{HilPr-69} in 1900.
He mentioned that the Riemann zeta-function $\zeta(s)$ does not satisfy any non-trivial
algebraic differential equation.

Recall the definition of $\zeta(s)$.
Let $\mathbb{P}$ be the set of all prime numbers, and $\mathbb{C}$ the set of all
complex numbers.
For $s=\sigma+it\in\mathbb{C}$, $\zeta(s)$ is given by
$$
\zeta(s)=\sum_{m=1}^{\infty}\frac{1}{m^s}=\prod_{p \in \mathbb{P}}\left(1-\frac{1}{p^s}\right)^{-1}
$$
for $\sigma>1$,
and can be analytically continued to the whole complex plane $\cc$ except for a simple pole
at the point $s=1$ with residue 1.

In 1973, S.M.~Vo\-ro\-nin proved \cite{SMV-73} the following functional independence
result on $\zeta(s)$.
Let $\mathbb{N}$ and $\mathbb{N}_0$ be the set of positive integers,
and non-negative integers, respectively.

\begin{thm}[\cite{SMV-73}]\label{Voronin-73}
Let $N\in\mathbb{N}$ and $n\in\mathbb{N}_0$.
The function $\zeta(s)$ does not satisfy any differential equation
$$
\sum_{j=0}^{n}s^j F_j\left(\zeta(s),\zeta'(s),\ldots,\zeta^{(N-1)}(s)\right)\equiv 0
$$
for continuous functions $F_j$, $j=0,\ldots,n$, not all identically zero.
\end{thm}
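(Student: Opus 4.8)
The plan is to deduce Theorem~A from Voronin's universality theorem for $\zeta(s)$. Fix any $\sigma_0\in(1/2,1)$ and put $\underline{v}(t)=\bigl(\zeta(\sigma_0+it),\zeta'(\sigma_0+it),\dots,\zeta^{(N-1)}(\sigma_0+it)\bigr)\in\cc^N$. The first and key step is to show that, for every $T_0>0$, the set $\{\underline{v}(t):t\in\rr,\ |t|\ge T_0\}$ is dense in $\cc^N$. Given a target $\underline{a}=(a_0,\dots,a_{N-1})$, one may by density assume $a_0\neq 0$, and then choose a polynomial $p$ such that $g:=\exp(p)$ satisfies $g^{(k)}(\sigma_0)=a_k$ for $0\le k\le N-1$: the values $p(\sigma_0),p'(\sigma_0),\dots,p^{(N-1)}(\sigma_0)$ can be determined recursively from $a_0\neq 0,a_1,\dots,a_{N-1}$ through the formulas for the derivatives of $\exp(p)$. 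On a small closed disc $K$ centered at $\sigma_0$ inside the strip, $g$ is nonvanishing and analytic, so Voronin's universality theorem supplies $t$ of arbitrarily large modulus (for any $T_0$, an admissible $t>T_0$ occurs in $[0,T]$ once $T$ is large, since the admissible set has positive lower density) with $\max_{s\in K}|\zeta(s+it)-g(s)|$ as small as we wish; Cauchy's integral formula on a slightly smaller circle transfers this bound to the first $N-1$ derivatives, so that $\underline{v}(t)$ lies arbitrarily close to $\bigl(g(\sigma_0),\dots,g^{(N-1)}(\sigma_0)\bigr)=\underline{a}$.

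Next I would run the standard dominant-term argument. Suppose the asserted equation holds with continuous $F_0,\dots,F_n$ not all identically zero, and let $m$ be the largest index with $F_m\not\equiv 0$. Restricting $s$ to the line $\Re s=\sigma_0$ gives
$$\sum_{j=0}^{m}(\sigma_0+it)^j F_j\bigl(\underline{v}(t)\bigr)=0\qquad\text{for all }t\in\rr.$$
If $m=0$, then $F_0$ vanishes on the dense set $\{\underline{v}(t)\}$ and hence, being continuous, $F_0\equiv 0$ --- a contradiction. If $m\ge 1$, pick $\underline{a}$ (with first coordinate $\neq 0$) with $F_m(\underline{a})\neq 0$, then a closed ball $\overline{B}(\underline{a},\rho)$ on which $|F_m|\ge\delta>0$, and set $M:=\max_{0\le j\le m}\sup_{\overline{B}(\underline{a},\rho)}|F_j|<\infty$. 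By the first step there are $t$ with $|t|$ arbitrarily large and $\underline{v}(t)\in\overline{B}(\underline{a},\rho)$, and for such $t$
$$0=\Bigl|\sum_{j=0}^{m}(\sigma_0+it)^j F_j\bigl(\underline{v}(t)\bigr)\Bigr|\ge \delta\,|\sigma_0+it|^{m}-M\sum_{j=0}^{m-1}|\sigma_0+it|^{j}\longrightarrow+\infty,$$
which is absurd. This proves Theorem~A.

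The only substantial ingredient is the density statement of the first step, including the insistence that the return times $t$ with $\underline{v}(t)\in\overline{B}(\underline{a},\rho)$ be of arbitrarily large modulus; this is precisely (a consequence of) the universality theorem for $\zeta(s)$ --- equivalently, the density of the ``Voronin curve'' $t\mapsto\bigl(\zeta(\sigma_0+it),\dots,\zeta^{(N-1)}(\sigma_0+it)\bigr)$ established in \cite{SMV-73} --- and I expect it to be the main obstacle. The remaining pieces --- the recursive choice of the polynomial $p$, the Cauchy estimates passing from $\zeta$ to its derivatives, and the final dominant-term inequality --- are elementary.
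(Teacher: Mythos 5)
Your proposal is correct and follows essentially the same route the paper takes for its own (mixed joint) analogue, Theorem~1: universality on a small disc plus Cauchy estimates yields density of the curve $t\mapsto\bigl(\zeta(\sigma_0+it),\dots,\zeta^{(N-1)}(\sigma_0+it)\bigr)$ (the paper's Lemma~\ref{km-lem-2}, where the target derivative data are likewise encoded in a polynomial/exponential of a polynomial to meet the non-vanishing requirement), followed by the dominant-term argument killing the top nonzero $F_m$. The paper itself only cites Voronin for Theorem~A, but your argument is precisely the Voronin-style proof it invokes, and your explicit treatment of the case $a_0\neq 0$ and of the growth of $|\sigma_0+it|^m$ is sound.
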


Later this result was generalized to other zeta- and $L$-functions. For the survey, see the monographs by A.~Laurin\v cikas \cite{AL-96} and J.~Steuding \cite{JSt-07}.

Nowadays in analytic number theory, the investigation of statistical properties
(and also the functional independence) for a collection of various zeta-functions,
some of them have the Euler product expansion and others do not have, is a very interesting problem since an important role is played by parameters included in the definition of
those functions.

The first result in this direction is due to H.~Mishou. In 2007, he proved \cite[Theorem~4]{HM-07} that the pair of zeta-functions consisting of the Riemann zeta-function $\zeta(s)$
and the Hurwitz zeta-function $\zeta(s,\alpha)$ is functionally independent.

We recall that the Hurwitz zeta-function $\zeta(s,\alpha)$ with a fixed parameter $\alpha$, $0<\alpha \leq 1$, is defined by the Dirichlet series
$$
\zeta(s,\alpha)=\sum_{m=0}^{\infty}\frac{1}{(m+\alpha)^s}
$$
for $\sigma>1$,
and can be continued to the whole complex plane, except, for a simple pole at the point $s=1$ with residue 1. In general, the function $\zeta(s,\alpha)$ has no Euler product over primes, except for the cases $\alpha=1$ and $\alpha=\frac{1}{2}$, when
$\zeta(s,\alpha)$ is essentially reduced to $\zeta(s)$.
Then Mishou's result is the following statement.

\begin{thmb}[\cite{HM-07}]\label{Mishou-2007}
Suppose that $\alpha$ is transcendental. Let $N \in \mathbb{N}$,
$n\in\mathbb{N}_0$, and
$F_j: \mathbb{C}^{2N}\to \mathbb{C}$ be a continuous function for each $j=0,\ldots,n$.
Suppose%
\begin{eqnarray*}
\sum_{j=0}^{n}s^j \cdot F_j\big(\zeta(s),\zeta'(s),\ldots,\zeta^{(N-1)}(s),\zeta(s,\alpha),\zeta'(s,\alpha),\ldots,\zeta^{(N-1)}(s,\alpha)\big)\equiv 0.
\end{eqnarray*}
Then $F_j\equiv 0$ for $j=0,\ldots,n$.
\end{thmb}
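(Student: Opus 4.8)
The plan is to derive Theorem~B from the mixed joint universality of the pair $\big(\zeta(s),\zeta(s,\alpha)\big)$ for transcendental $\alpha$. Recall that this universality theorem asserts: for every compact set $K$ in the strip $\{1/2<\sigma<1\}$ with connected complement, every $g_1$ continuous and non-vanishing on $K$ and holomorphic in its interior, every $g_2$ continuous on $K$ and holomorphic in its interior, and every $\eps>0$, the set of $\tau\in\rr$ satisfying
$$\sup_{s\in K}\big|\zeta(s+i\tau)-g_1(s)\big|<\eps\qquad\text{and}\qquad\sup_{s\in K}\big|\zeta(s+i\tau,\alpha)-g_2(s)\big|<\eps$$
has positive lower density; in particular this set is unbounded. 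The transcendence of $\alpha$ enters precisely here, through the $\qq$-linear independence of $\{\log p:p\in\pp\}\cup\{\log(m+\alpha):m\in\n0\}$.

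First I would turn this into a density statement for the vector of derivatives. Fix $s_0$ with $1/2<\Re s_0<1$, choose $r>0$ with $K:=\{s:|s-s_0|\le r\}\subset\{1/2<\sigma<1\}$, and set
$$\underline Z(s):=\big(\zeta(s),\dots,\zeta^{(N-1)}(s),\,\zeta(s,\alpha),\dots,\zeta^{(N-1)}(s,\alpha)\big)\in\cc^{2N}.$$
Given an arbitrary $\underline a=(a_0,\dots,a_{2N-1})$ with $a_0\neq0$, choose a polynomial $h$ solving the triangular system equivalent to $(e^{h})^{(k)}(s_0)=a_k$ for $0\le k\le N-1$ (solvable because $a_0\neq0$), and a polynomial $g_2$ with $g_2^{(k)}(s_0)=a_{N+k}$ for $0\le k\le N-1$; then $g_1:=e^{h}$ is non-vanishing on $K$. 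Applying the universality theorem with small $\eps$, and bounding the derivatives at the centre of $K$ by Cauchy's inequalities $|f^{(k)}(s_0)|\le k!\,r^{-k}\sup_{K}|f|$ (for $f(s)=\zeta(s+i\tau)-g_1(s)$ and $f(s)=\zeta(s+i\tau,\alpha)-g_2(s)$), one obtains arbitrarily large $\tau$ with $|\underline Z(s_0+i\tau)-\underline a|<C(N,r)\,\eps$. Hence the closure of $\{\underline Z(s_0+i\tau):\tau\in\rr\}$ contains $(\cc\setminus\{0\})\times\cc^{2N-1}$, so the curve $\tau\mapsto\underline Z(s_0+i\tau)$ is dense in $\cc^{2N}$. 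This density step is the only place in the argument that requires care: universality forces the target to be non-vanishing in the $\zeta$-coordinate, so $a_0=0$ cannot be prescribed directly, but this is harmless since $(\cc\setminus\{0\})\times\cc^{2N-1}$ is already dense in $\cc^{2N}$, and the jet $\big(g_1(s_0),\dots,g_1^{(N-1)}(s_0)\big)$ is unconstrained (apart from $g_1(s_0)\neq0$) when $g_1$ ranges over exponentials of polynomials.

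Finally I would peel off the functions $F_j$ in decreasing order of $j$. Put $P(s):=\sum_{j=0}^{n}s^{j}F_j(\underline Z(s))$, which by hypothesis vanishes identically. Fix $s_0$ as above and an arbitrary $\underline a\in\cc^{2N}$; by the density just proved, choose $\tau_m\to\infty$ with $\underline Z(s_0+i\tau_m)\to\underline a$. Dividing the identity $P(s_0+i\tau_m)=0$ by $(s_0+i\tau_m)^{n}$ gives
$$0=\sum_{j=0}^{n}(s_0+i\tau_m)^{\,j-n}\,F_j\big(\underline Z(s_0+i\tau_m)\big),$$
and as $m\to\infty$ the terms with $j<n$ tend to $0$ while $F_j\big(\underline Z(s_0+i\tau_m)\big)\to F_j(\underline a)$ by continuity; hence $F_n(\underline a)=0$. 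Since $\underline a$ was arbitrary and $F_n$ is continuous, $F_n\equiv0$. The relation then reduces to $\sum_{j=0}^{n-1}s^{j}F_j(\underline Z(s))\equiv0$, and an induction on $n$ (the base case $n=0$ being the same density argument without the division) yields $F_j\equiv0$ for $j=0,\dots,n$. Thus, modulo the universality theorem --- which carries all the analytic weight --- the only substantive point is the density step of the second paragraph; once the curve $\tau\mapsto\underline Z(s_0+i\tau)$ is known to be dense, the remaining ``divide by the top power of $s$ and pass to the limit along a universality sequence'' argument is purely formal.
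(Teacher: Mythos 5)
Your proof is correct and follows essentially the same route the paper takes for its own Theorem~1 (Theorem~B itself is only quoted from Mishou): mixed joint universality on a compact disc, Cauchy's inequalities to transfer approximation of target functions to approximation of the jet of derivatives, hence denseness of the curve $\tau\mapsto\underline{Z}(s_0+i\tau)$ in $\mathbb{C}^{2N}$, and then Voronin's argument of isolating the top coefficient $F_n$ as $\tau\to\infty$ and inducting downward. Your explicit treatment of the non-vanishing constraint on the first component via $g_1=e^{h}$, and the observation that omitting $a_0=0$ does not harm density, supply a detail the paper's sketch of its denseness lemma glosses over, but the approach is the same.
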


This is the first ``mixed joint'' functional independence theorem.
Later this result was generalized to the collection of a periodic zeta-function and
a periodic Hurwitz zeta-function by the first-named author and A.~Laurin\v cikas
in \cite{RK-AL-11}.

In this paper, we will prove rather general result on the mixed joint functional independence for a class of zeta-functions, consisting of the so-called Matsumoto zeta-functions and periodic Hurwitz zeta-functions.



Let ${\mathfrak{B}}=\{b_m: m \in \mathbb{N}_0\}$ be a periodic sequence of complex numbers
(not all zero)
with minimal period $k \in \mathbb{N}$, and $0<\alpha\leq 1$.
In 2006, A.~Javtokas and A.~Laurin\v cikas \cite{AJ-AL-06} introduced  the periodic Hurwitz zeta-function $\zeta(s,\alpha;{\mathfrak{B}})$. For $\sigma>1$, it  is given by the series
$$
\zeta(s,\alpha;{\mathfrak{B}})=\sum_{m=0}^{\infty}\frac{b_m}{(m+\alpha)^s}.
$$
It is known that
$$
\zeta(s,\alpha;{\mathfrak{B}})=\frac{1}{k^s}\sum_{l=0}^{k-1}b_l\zeta\left(s,\frac{l+\alpha}{k}\right), \quad \sigma>1.
$$
Therefore the function $\zeta(s,\alpha;{\mathfrak{B}})$ can be analytically continued
to the whole complex plane, except for a possible simple pole at the point $s=1$
with residue
$$
b:=\frac{1}{k}\sum_{l=0}^{k-1}b_l.
$$
If $b=0$, the corresponding periodic Hurwitz zeta-function is an entire function.

The functional independence of periodic Hurwitz zeta-functions was proved by
A.~Laurin\v cikas in \cite[Theorem~1]{AL-08}.

Now we recall the definition of the polynomial Euler products ${\widetilde{\varphi}}(s)$, or, so-called Matsumoto zeta-functions.

Let, for $m \in \mathbb{N}$, $g(m)\in \mathbb{N}$, and, for $j \in \mathbb{N}$ and
$1\leq j \leq g(m)$, $f(j,m)\in \mathbb{N}$. Denote by $p_m$ the $m$th prime number, and $a_m^{(j)}\in \mathbb{C}$. The zeta-function ${\widetilde{\varphi}}$ was introduced by the second-named author in \cite{KM-90}, and it is defined by the polynomial Euler product
\begin{equation}\label{km-eq-1}
{\widetilde\varphi}(s)=\prod_{m=1}^{\infty}\prod_{j=1}^{g(m)}\left(1-a_m^{(j)}p_m^{-sf(j,m)}\right)^{-1}.
\end{equation}

Suppose that
\begin{equation}\label{km-eq-2}
g(m)\leq C_1p_m^\alpha \quad \text{and} \quad  |a_m^{(j)}|\leq p_m^\beta
\end{equation}
with a positive constant $C_1$ and non-negative constants $\alpha$ and $\beta$.
In view of \eqref{km-eq-2}, the function $\widetilde{\varphi}(s)$ converges absolutely for $\sigma>\alpha+\beta+1$ (see Appendix), and hence in this region it can be expressed as
the Dirichlet series
\begin{align}\label{km-eq-2.5}
{\widetilde \varphi}(s)=\sum_{k=1}^{\infty}\frac{{\widetilde c}_k}{k^s},
\end{align}
with coefficients ${\widetilde c}_k$. For brevity, denote the shifted version of the function ${\widetilde \varphi}(s)$ by
\begin{equation}\label{km-eq-3}
\varphi(s)={\widetilde \varphi}(s+\alpha+\beta)=\sum_{k=1}^{\infty}\frac{{\widetilde c}_k}{k^{s+\alpha+\beta}}=\sum_{k=1}^{\infty}\frac{c_k}{k^s}
\end{equation}
with $c_k=k^{-\alpha-\beta}{\widetilde c}_k$. Then, for $\sigma>1$, the series on the right-hand side of \eqref{km-eq-3} converges absolutely.

The aim of this paper is to obtain a mixed joint functional independence of the collection
of zeta-functions consisting of the Matsumoto zeta-function $\varphi(s)$ belonging to
the Steuding subclass ${\widetilde S}$, defined below, and periodic Hurwitz zeta-functions
$\zeta(s,\alpha;{\mathfrak{B}})$.

We say that the function $\varphi(s)$ belongs to the class ${\widetilde S}$ if following conditions are satisfied:
\begin{itemize}
  \item[(i)] there exists a Dirichlet series expansion
  $$
  \varphi(s)=\sum_{m=1}^{\infty}\frac{a(m)}{m^s}
  $$
  with $a(m)=O(m^\varepsilon)$ for every $\varepsilon>0$;
  \item[(ii)] there exists $\sigma_\varphi<1$ such that $\varphi(s)$ can be meromorphically continued to the half-plane $\sigma>\sigma_\varphi$;
  \item[(iii)] there exists a constant $c \geq 0$ such that
  $$
  \varphi(\sigma+it)=O(|t|^{c+\varepsilon})
  $$
  for every fixed $\sigma>\sigma_\varphi$ and $\varepsilon>0$;
  \item[(iv)] there exists the Euler product expansion over prime numbers, i.e.,
  $$
  \varphi(s)=\prod_{p \in \mathbb{P}}\prod_{j=1}^{l}\left(1-\frac{a_j(p)}{p^s}\right)^{-1};
  $$
  \item[(v)] there exists a constant $\kappa>0$ such that
  $$
  \lim_{x \to \infty}\frac{1}{\pi(x)}\sum_{p \leq x}|a(p)|^2=\kappa,
  $$
  where $\pi(x)$ denotes the number of primes $p$, $p \leq x$.
\end{itemize}

This class was introduced by J.~Steuding in  \cite{JSt-07}, and is a subclass of the
class of Matsumoto zeta-functions.
For $\varphi\in{\widetilde S}$, let $\sigma^*$ be the infimum of all $\sigma_1$ for which
$$
\frac{1}{2T}\int_{-T}^T |\varphi(\sigma+it)|^2 dt\sim\sum_{m=1}^{\infty}
\frac{|a(m)|^2}{m^{2\sigma}}
$$
holds for any $\sigma\geq\sigma_1$.    Then it is known that $\frac{1}{2}\leq\sigma^*<1$
(see \cite[Theorem 2.4]{JSt-07}).

We state the first main result of this paper in the following theorem.

\begin{theorem}\label{km-thm-1}
Suppose that $\alpha$ is a transcendental number, and the function $\varphi(s)$ belongs
to the class $\widetilde S$. Let $N \in \mathbb{N}$, $n\in\mathbb{N}_0$,
and the function
$F_j: {\mathbb{C}}^{2N}\to \mathbb{C}$ be continuous for $j=0,1,\ldots,n$.   If
\begin{align*}
&\sum_{j=0}^{n}s^j\cdot F_j\big(\varphi(s), \varphi'(s),\ldots,\varphi^{(N-1)}(s), \zeta(s,\alpha;\mathfrak{B}),\zeta'(s,\alpha;\mathfrak{B}),\ldots,\zeta^{(N-1)}(s,\alpha;\mathfrak{B})
\big)
\end{align*}
is identically zero, then $F_j \equiv 0$ for $j=0,1,\ldots,n$.
\end{theorem}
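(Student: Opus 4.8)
The plan is to derive Theorem~\ref{km-thm-1} from the mixed joint universality theorem for $\varphi(s)$ and $\zeta(s,\alpha;\mathfrak{B})$ proved in \cite{RK-KM-15}, following the classical scheme that goes back to Voronin and was used by Mishou in Theorem~B. Fix a real number $\sigma_0$ with $\sigma^*<\sigma_0<1$ (chosen so that the line $\operatorname{Re}s=\sigma_0$ is free of poles of $\varphi$) and a closed disc $K=\{s:|s-\sigma_0|\le r\}$ with $r$ so small that $K$ lies in the strip $\{\sigma^*<\sigma<1\}$; since $\sigma^*\ge\frac12$, this strip is contained in $\{\frac12<\sigma<1\}$, so $K$ is an admissible compact set (connected complement) for the universality statements for both $\varphi$ and $\zeta(\cdot,\alpha;\mathfrak{B})$ simultaneously. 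Write $\underline{v}(s)=\big(\varphi(s),\dots,\varphi^{(N-1)}(s),\zeta(s,\alpha;\mathfrak{B}),\dots,\zeta^{(N-1)}(s,\alpha;\mathfrak{B})\big)\in\cc^{2N}$.

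The first and main step is to show that the set $\mathcal{V}=\{\underline{v}(\sigma_0+i\tau):\tau\in\rr\}$ is dense in $\cc^{2N}$, and more precisely that every open ball $B$ in $\cc^{2N}$ meeting $\{w_1\neq 0\}$ is hit by $\underline{v}(\sigma_0+i\tau)$ for a set of $\tau$ of positive lower density (in particular, for an unbounded set of $\tau$). Given a target $\underline{a}=(a_0,\dots,a_{2N-1})$ with $a_0\neq 0$, I would pick polynomials $P_1,P_2$ so that $g_1:=\exp(P_1)$ satisfies $g_1^{(i)}(\sigma_0)=a_i$ and $g_2:=P_2$ satisfies $g_2^{(i)}(\sigma_0)=a_{N+i}$ for $i=0,\dots,N-1$: the choice of $P_1$ is possible precisely because $a_0\neq 0$ allows one to solve recursively for $P_1(\sigma_0),P_1'(\sigma_0),\dots$, and $g_1$ is automatically zero-free on $K$. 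Applying the mixed joint universality theorem of \cite{RK-KM-15} with these $g_1,g_2$ on $K$ yields, for every $\eta>0$, a set of $\tau$ of positive lower density with $\sup_{s\in K}|\varphi(s+i\tau)-g_1(s)|<\eta$ and $\sup_{s\in K}|\zeta(s+i\tau,\alpha;\mathfrak{B})-g_2(s)|<\eta$; by Cauchy's estimates for derivatives on the disc $|s-\sigma_0|\le r/2$, this forces $\|\underline{v}(\sigma_0+i\tau)-\underline{a}\|\le C_{r,N}\,\eta$, and taking $\eta$ small enough gives the assertion.

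Now suppose, for contradiction, that not all $F_j$ vanish identically, and let $m$ be the largest index with $F_m\not\equiv 0$. Choose $\underline{a}\in\cc^{2N}$ with $a_0\neq 0$ and $F_m(\underline{a})\neq 0$ (possible since $\{F_m\neq 0\}$ is open and $\{w_1\neq 0\}$ is open and dense). By continuity of $F_0,\dots,F_m$ pick $\eps>0$ so small that on the closed ball $\overline{B(\underline{a},\eps)}\subset\{w_1\neq 0\}$ one has $|F_m|>\delta>0$ and $|F_j|\le M$ for $j=0,\dots,m$. By the first step there is an unbounded set of $\tau$ with $\underline{v}(\sigma_0+i\tau)\in B(\underline{a},\eps)$; for any such $\tau$ the hypothesis gives $\sum_{j=0}^{m}(\sigma_0+i\tau)^j F_j(\underline{v}(\sigma_0+i\tau))=0$, whence (already a contradiction if $m=0$, since the left side is then $|F_0|>\delta$) for $m\ge 1$ and $|\sigma_0+i\tau|\ge 1$ one gets $\delta\,|\sigma_0+i\tau|^m<|\sigma_0+i\tau|^m|F_m(\underline{v}(\sigma_0+i\tau))|\le M\sum_{j=0}^{m-1}|\sigma_0+i\tau|^{j}\le Mm\,|\sigma_0+i\tau|^{m-1}$, i.e. $|\sigma_0+i\tau|<Mm/\delta$. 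This bounds $|\tau|$, contradicting unboundedness. Hence $F_j\equiv 0$ for all $j$, which proves Theorem~\ref{km-thm-1}.

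The step I expect to be the main obstacle is the first one: passing from the joint universality theorem to the density (with positive lower density) of the joint value set of $\varphi$, $\zeta(s,\alpha;\mathfrak{B})$ and their derivatives up to order $N-1$ in $\cc^{2N}$. The only real point to verify carefully is that the zero-freeness restriction on the $\varphi$-component — which is forced by the Euler product hypothesis in the universality theorem — does not shrink the closure of the value set, and this is exactly what the device $g_1=\exp(P_1)$ handles. The remaining passage from sup-norm approximation on a disc to approximation of all derivatives at the centre is a routine application of Cauchy's integral formula, and the final contradiction is just a growth estimate in $t$.
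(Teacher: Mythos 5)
Your proposal is correct and follows essentially the same route as the paper: deduce from the mixed joint universality theorem of \cite{RK-KM-15} a denseness statement for the joint value set of $\varphi,\zeta(\cdot,\alpha;\mathfrak{B})$ and their first $N-1$ derivatives (the paper's Lemma~\ref{km-lem-2}, proved via prescribed-derivative polynomials and Cauchy estimates), and then run Voronin's growth/contradiction argument on the top nonvanishing coefficient $F_m$. Your use of $g_1=\exp(P_1)$ to meet the non-vanishing requirement on the $\varphi$-component, and your explicit bound $|\sigma_0+i\tau|<Mm/\delta$, are just more detailed versions of steps the paper delegates to \cite{RK-AL-11} or leaves implicit.
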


\section{Proof of Theorem 1}

To the proof of the mixed joint functional independence for the functions $\varphi(s)$
and $\zeta(s,\alpha;{\mathfrak{B}})$, we need two further propositions: the mixed joint universality theorem and the so-called denseness lemma.

\subsection{Mixed joint universality of the functions $\varphi(s)$ and $\zeta(s,\alpha;{\mathfrak{B}})$}

The proof of Theo\-rem~\ref{km-thm-1} is based on the mixed joint universality theorem in
the Voronin sense for the functions $\varphi(s)$ and $\zeta(s,\alpha;{\mathfrak{B}})$.
It was obtained by the authors in \cite[Theorem~2.2]{RK-KM-15}. We will give the statement of this universality theorem as a lemma.
Let $\mathbb{R}$ be the set of real numbers, and
$D(a,b)=\{s\in\mathbb{C}:a<\sigma<b\}$ for any $a<b$.
For any compact subset $K\subset\mathbb{C}$, denote by $H^c(K)$ the set of all
$\mathbb{C}$-valued continuous functions defined on $K$, holomorphic in the interior of
$K$.   By $H_0^c(K)$ we mean the subset of $H^c(K)$ consisting of all elements which are
non-vanishing on $K$.
\begin{lemma}[\cite{RK-KM-15}]\label{km-lem-1}
Suppose that $\varphi\in {\widetilde S}$, and $\alpha$ is a transcendental number. Let
$K_1$ be a compact subset of $D(\sigma^*,1)$, and $K_2$ be a compact subset of
$D(\frac{1}{2},1)$, both with connected complements. Suppose that $f_1\in H_0^c(K_1)$
and $f_2\in H^c(K_2)$.   Then, for every $\varepsilon>0$,
\begin{align*}
\liminf\limits_{T \to \infty}\frac{1}{T}\mu\bigg\{\tau\in [0,T]: &\; \sup\limits_{s \in K_1}|\varphi(s+i\tau)-f_1(s)|<\varepsilon, \\ &\; \sup\limits_{s\in K_2}|\zeta(s+i\tau,\alpha;{\mathfrak{B}})-f_2(s)|<\varepsilon\bigg\}>0,
\end{align*}
where $\mu\{A\}$ denotes the Lebesgue measure of the measurable set $A \subset {\mathbb{R}}$.
\end{lemma}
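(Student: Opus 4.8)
The plan is to follow Bagchi's probabilistic method in the form adapted to the mixed situation. Let $\Omega_1=\prod_{p\in\mathbb{P}}\gamma_p$ with $\gamma_p=\{z\in\mathbb{C}:|z|=1\}$, indexed by primes, and $\Omega_2=\prod_{m\in\mathbb{N}_0}\gamma_m$ with $\gamma_m=\{z\in\mathbb{C}:|z|=1\}$, indexed by non-negative integers; put $\Omega=\Omega_1\times\Omega_2$, a compact abelian group, with normalized Haar probability measure $m_H$. Writing $\omega=(\omega_1,\omega_2)\in\Omega$, introduce the random elements
$$
\varphi(s,\omega_1)=\prod_{p\in\mathbb{P}}\prod_{j=1}^{l}\Bigl(1-\frac{a_j(p)\,\omega_1(p)}{p^s}\Bigr)^{-1},
\qquad
\zeta(s,\alpha,\omega_2;\mathfrak{B})=\sum_{m=0}^{\infty}\frac{b_m\,\omega_2(m)}{(m+\alpha)^s}.
$$
Using conditions (i), (iii) and the definition of $\sigma^*$ (a Carlson-type second-moment bound), these converge for almost all $\omega$ to holomorphic functions on $D(\sigma^*,1)$ and $D(\tfrac12,1)$ respectively, hence define random elements of $H(D(\sigma^*,1))$ and $H(D(\tfrac12,1))$, where $H(D)$ is the space of holomorphic functions on $D$ with the topology of uniform convergence on compacta.

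First I would prove a joint limit theorem: as $T\to\infty$, the distribution on $[0,T]$ of $\tau\mapsto\bigl(\varphi(s+i\tau),\zeta(s+i\tau,\alpha;\mathfrak{B})\bigr)$ converges weakly to the distribution $P_{\varphi,\alpha}$ of $\bigl(\varphi(s,\omega_1),\zeta(s,\alpha,\omega_2;\mathfrak{B})\bigr)$ on $H(D(\sigma^*,1))\times H(D(\tfrac12,1))$. The key point is that the one-parameter subgroup $\{((p^{-i\tau})_{p},((m+\alpha)^{-i\tau})_{m}):\tau\in\mathbb{R}\}$ is dense in $\Omega$ and acts ergodically, which follows from the $\mathbb{Q}$-linear independence of $\{\log p:p\in\mathbb{P}\}\cup\{\log(m+\alpha):m\in\mathbb{N}_0\}$. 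This independence is exactly where the transcendence of $\alpha$ enters: a nontrivial rational relation would, after exponentiation, either yield an algebraic equation for $\alpha$ over $\mathbb{Q}$ or, if the factors $m+\alpha$ do not occur, contradict unique factorization of the integers. The limit theorem is then assembled in the standard way, first for the finite truncations (Euler factors over $p\le y$ and the Dirichlet sum over $m\le N$) via the equidistribution of the flow, and then by removing the truncations using mean-square estimates on $D(\sigma^*,1)$ and $D(\tfrac12,1)$, for which condition (iii) with the choice of $\sigma^*$ (resp.\ classical bounds for the Hurwitz zeta-function) is precisely what is needed.

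Next I would identify the support of $P_{\varphi,\alpha}$. By the product structure of $\Omega$ and the joint independence above, the support factorizes as $\mathcal{S}\times H(D(\tfrac12,1))$, where $\mathcal{S}=\{g\in H(D(\sigma^*,1)):g\equiv0\text{ or }g(s)\ne0\text{ for all }s\}$. For the second factor one shows that the translates $\sum_m b_m\omega_2(m)(m+\alpha)^{-s}$ fill out the whole of $H(D(\tfrac12,1))$; this is the usual Hurwitz-type argument, combining the $\mathbb{Q}$-linear independence of $\{\log(m+\alpha)\}$ with a rearrangement theorem of Pechersky type for conditionally convergent series in a Hilbert space. For the first factor, condition (iv) lets one pass to $\log\varphi(s,\omega_1)$, and condition (v) — the positive-density hypothesis $\pi(x)^{-1}\sum_{p\le x}|a(p)|^2\to\kappa>0$ — guarantees that the resulting series of $H(D(\sigma^*,1))$-valued vectors is not absolutely convergent yet satisfies the hypotheses of the same rearrangement machinery, so that the closure of its translates is all of $\mathcal{S}$; here the passage from the bare Matsumoto class to the Steuding subclass $\widetilde S$, with its parameter $\tfrac12\le\sigma^*<1$, is what makes the argument run.

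Finally, given $f_1\in H_0^c(K_1)$ and $f_2\in H^c(K_2)$, since $f_1$ is non-vanishing on the compact set $K_1$ of connected complement there is, by Mergelyan's theorem, a polynomial $q_1$ with $\sup_{K_1}|f_1-e^{q_1}|<\varepsilon/2$, and likewise a polynomial $q_2$ with $\sup_{K_2}|f_2-q_2|<\varepsilon/2$; the functions $e^{q_1}\in\mathcal{S}$ and $q_2\in H(D(\tfrac12,1))$ lie in the support. Hence the set $G=\bigl\{(g_1,g_2):\sup_{K_1}|g_1-e^{q_1}|<\tfrac{\varepsilon}{2},\ \sup_{K_2}|g_2-q_2|<\tfrac{\varepsilon}{2}\bigr\}$ is open and meets the support, so $P_{\varphi,\alpha}(G)>0$; the joint limit theorem together with openness of $G$ gives
$$
\liminf_{T\to\infty}\frac{1}{T}\mu\bigl\{\tau\in[0,T]:\bigl(\varphi(s+i\tau),\zeta(s+i\tau,\alpha;\mathfrak{B})\bigr)\in G\bigr\}\ge P_{\varphi,\alpha}(G)>0,
$$
and the triangle inequality turns this into the assertion of the lemma. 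I expect the main obstacle to be the support computation — in particular showing that the two supports genuinely multiply and that the Euler-product component is the full class of non-vanishing functions — since this is where the transcendence of $\alpha$ and the full strength of condition (v), not merely finiteness of a mean value, must be exploited, uniformly over the strip.
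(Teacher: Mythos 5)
Your proposal reproduces exactly the Bagchi-type scheme that the paper relies on: this lemma is quoted from \cite{RK-KM-15}, where it is proved via the mixed joint limit theorem on $\Omega_1\times\Omega_2$, the identification of the support as $S_\varphi\times H(D(\tfrac12,1))$, Mergelyan approximation of $f_1$ by $e^{q_1}$ and of $f_2$ by $q_2$, and the portmanteau inequality --- the same route the present paper sketches again in Section~5 for the generalized Theorem~3. Your outline is correct and essentially identical to the paper's approach, including the role of the transcendence of $\alpha$ (linear independence of $\{\log p\}\cup\{\log(m+\alpha)\}$ over $\mathbb{Q}$) and of condition (v) in the support computation.
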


Note that for the proof of Lemma~\ref{km-lem-1} we use  the joint mixed limit theorem in the sense of weakly convergent probability measures for the Matsumoto zeta-functions $\varphi(s)$ and the periodic Hurwitz zeta-function $\zeta(s,\alpha;{\mathfrak{B}})$. It was proved by the authors in \cite[Theo\-rem~2.1]{RK-KM-15}.

\subsection{A denseness lemma}

For the proof of Theorem~\ref{km-thm-1}, we need a denseness lemma.

Define the map $u: {\mathbb{R}} \to {\mathbb{C}}^{2N}$ by the formula
\begin{eqnarray*}
u(t)&=&\big(\varphi(\sigma+it),\varphi'(\sigma+it),\ldots,\varphi^{(N-1)}(\sigma+it),\cr
&& \quad \zeta(\sigma+it,\alpha;{\mathfrak{B}}),\zeta'(\sigma+it,\alpha;{\mathfrak{B}}),\ldots,\zeta^{(N-1)}(\sigma+it,\alpha;{\mathfrak{B}})\big)
\end{eqnarray*}
with $\frac{1}{2}<\sigma<1$.

\begin{lemma}\label{km-lem-2}
Suppose that $\alpha$ is transcendental. Then the image of ${\mathbb{R}}$ by $u$ is
dense in ${\mathbb{C}}^{2N}$.
\end{lemma}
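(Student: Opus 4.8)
The plan is to deduce denseness of the image $u(\mathbb{R})$ in $\mathbb{C}^{2N}$ directly from the mixed joint universality theorem (Lemma \ref{km-lem-1}). The point $\sigma$ with $\tfrac12<\sigma<1$ is fixed; since $\sigma^*<1$, by shrinking slightly if necessary we may assume $\sigma^*<\sigma<1$, so that $\sigma$ lies in the common strip of validity for both components. Fix an arbitrary target vector
$$
(a_0,a_1,\ldots,a_{N-1},b_0,b_1,\ldots,b_{N-1})\in\mathbb{C}^{2N}
$$
and an arbitrary $\eta>0$; the goal is to produce $\tau\in\mathbb{R}$ with $u(\tau)$ within $\eta$ of this vector (in, say, the maximum norm on $\mathbb{C}^{2N}$).

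First I would construct two polynomials. Choose a polynomial $p_1(s)$ with $p_1^{(r)}(\sigma)=a_r$ for $r=0,\ldots,N-1$ (a Taylor-type interpolation at the single point $\sigma$; e.g.\ $p_1(s)=\sum_{r=0}^{N-1}\tfrac{a_r}{r!}(s-\sigma)^r$), and similarly a polynomial $p_2(s)$ with $p_2^{(r)}(\sigma)=b_r$ for $r=0,\ldots,N-1$. The function $p_2$ plays the role of $f_2$ directly. For the first component we must be careful because Lemma \ref{km-lem-1} requires $f_1$ to be \emph{non-vanishing} on $K_1$: so instead of $p_1$ I would use $f_1(s)=e^{q(s)}$ where $q(s)$ is the polynomial chosen so that $e^{q(s)}$ has the prescribed derivatives $a_0,\ldots,a_{N-1}$ at $s=\sigma$. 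This is possible when $a_0\neq 0$ by solving successively for the Taylor coefficients of $q$ at $\sigma$ (the relation between the derivatives of $e^{q}$ and those of $q$ is triangular with nonzero diagonal entries coming from $a_0$); the case $a_0=0$ is handled by a routine limiting/approximation argument, approximating $(0,a_1,\ldots,a_{N-1})$ by vectors with nonzero first coordinate. Then $f_1\in H_0^c(K_1)$ for any compact $K_1$.

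Next, take $K_1$ to be a small closed disc centered at $\sigma$ contained in $D(\sigma^*,1)$, and $K_2$ a small closed disc centered at $\sigma$ contained in $D(\tfrac12,1)$; both have connected complement. Apply Lemma \ref{km-lem-1} with these $K_1,K_2$, $f_1,f_2$ as above, and a parameter $\varepsilon>0$ to be fixed. The lemma gives a set of $\tau$ of positive lower density (hence nonempty) with $\sup_{K_1}|\varphi(s+i\tau)-f_1(s)|<\varepsilon$ and $\sup_{K_2}|\zeta(s+i\tau,\alpha;\mathfrak B)-f_2(s)|<\varepsilon$. To pass from closeness of the functions on the discs to closeness of their derivatives at the center, I would invoke Cauchy's integral formula for derivatives: if two holomorphic functions differ by at most $\varepsilon$ on a disc of radius $\rho$ about $\sigma$, then their $r$-th derivatives at $\sigma$ differ by at most $r!\,\varepsilon/\rho^{r}$. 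Thus, choosing $\varepsilon$ small enough in terms of $\eta$, $N$, and the radii $\rho_1,\rho_2$ of $K_1,K_2$, we get, for such a $\tau$,
$$
|\varphi^{(r)}(\sigma+i\tau)-a_r|<\eta \quad (0\le r\le N-1), \qquad
|\zeta^{(r)}(\sigma+i\tau,\alpha;\mathfrak B)-b_r|<\eta \quad (0\le r\le N-1),
$$
so $u(\tau)$ is within $\eta$ of the target. Since the target and $\eta$ were arbitrary, $u(\mathbb{R})$ is dense in $\mathbb{C}^{2N}$.

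The main obstacle — really the only nontrivial point — is the non-vanishing constraint on $f_1$: one cannot simply take $f_1$ to be the interpolating polynomial, and one must realize an arbitrary jet $(a_0,\ldots,a_{N-1})$ at a single point by a zero-free holomorphic germ. Writing $f_1=e^{q}$ and solving the resulting triangular system for the coefficients of $q$ handles the generic case $a_0\ne0$, and the degenerate case $a_0=0$ follows by continuity since the set of attainable jets is then seen to be dense in $\mathbb{C}^{N}$; everything else is a routine application of universality plus Cauchy estimates. (Transcendence of $\alpha$ enters only through the hypotheses of Lemma \ref{km-lem-1}.)
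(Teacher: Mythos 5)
Your proposal is correct and follows essentially the same route as the paper: the paper's (very brief) sketch also reduces the problem to approximating the Taylor jets of interpolating polynomials $p_{lN}$ at a point via Lemma~\ref{km-lem-1}, deferring the remaining details (the $e^{q}$ device to meet the non-vanishing requirement on $f_1$, Cauchy's estimates for derivatives, and the degenerate jet $a_0=0$) to the analogous arguments in \cite[Lemma~13]{RK-AL-11} and \cite[Theorem~3]{HM-07}, which are exactly the details you supply. Your added remark that one should take $\sigma\in(\sigma^*,1)$ rather than merely $\sigma\in(\tfrac12,1)$ for the $\varphi$-component is a sensible precision consistent with how the lemma is used in the proof of Theorem~\ref{km-thm-1}.
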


\begin{proof}
We will give a sketch, since the proof follows in the same way as Lemma~13 from \cite{RK-AL-11} (see, also, \cite[Theorem~3]{HM-07}).

We can find a sequence $\{\tau_m: \tau_m\in{\mathbb{R}}\}$,
$\lim_{m\to\infty}\tau_m=\infty$, such that the inequalities
$$
|\varphi^{(j)}(\sigma+i\tau_m)-s_{1j}|<\frac{\varepsilon}{2N}
$$
and
$$
|\zeta^{(j)}(\sigma+i\tau_m,\alpha;{\mathfrak{B}})-s_{2j}|<\frac{\varepsilon}{2N}
$$
hold for every $\varepsilon>0$ and arbitrary complex numbers $s_{lj}$, $l=1,2$, $j=0,\ldots,N-1$.    To show this, we consider the polynomial
$$
p_{lN}(s)=\frac{s_{l,N-1}\cdot s^{N-1}}{(N-1)!}+\frac{s_{l,N-2}\cdot s^{N-2}}{(N-2)!}
+\ldots+\frac{s_{l0}}{0!}, \quad l=1,2.
$$
Then, for $j=0,\ldots,N-1$ and $l=1,2$, we have
$$
p_{lN}^{(j)}(0)=s_{lj}.
$$
Now, in view of Lemma~\ref{km-lem-1} and repeating analogous arguments as in the proof of Lemma~13 from \cite{RK-AL-11}, we can prove the existence of the above sequence
$\{\tau_m\}$ and obtain that the image of ${\mathbb{R}}$ by the map $u$
is dense in ${\mathbb{C}}^{2N}$.
\end{proof}

\subsection{Proof of Theorem \ref{km-thm-1}}

Now we are ready to complete the proof of Theorem \ref{km-thm-1}.
The essential idea is due to Voronin (see, for example, \cite{SMV-77}).
We first prove that $F_n\equiv 0$.

Suppose that $F_n \not\equiv 0$.   It follows that there exists a point
$${\underline a} =(s_{10},s_{11},\ldots,s_{1,N-1},s_{20},s_{21},\ldots,s_{2,N-1})\in {\mathbb{C}}^{2N}$$
such that $F_n({\underline a})\not =0$.   From the continuity of the function $F_n$,
we find a bounded domain $G \subset {\mathbb{C}}^{2N}$ such that ${\underline a} \in G$, and, for all ${\underline s \in G}$,
\begin{equation}\label{km-eq-4}
|F_n({\underline s})| \geq c>0.
\end{equation}
By Lemma~\ref{km-lem-2}, there exists a sequence $\{\tau_m: \tau_m \in {\mathbb{R}}\}$, $\lim_{m\to \infty}\tau_m=\infty$, such that
\begin{eqnarray*}
&& \big( \varphi(\sigma+it),\varphi'(\sigma+it), \ldots,\varphi^{(N-1)}(\sigma+it),\cr
&& \quad \zeta(\sigma+it,\alpha;{\mathfrak{B}}),\zeta'(\sigma+it,\alpha;{\mathfrak{B}}),\ldots,\zeta^{(N-1)}(\sigma+it,\alpha;{\mathfrak{B}})
\big)\in G.
\end{eqnarray*}
But this together with \eqref{km-eq-4} contradicts to the hypothesis of the theorem if
$\tau_m$ is sufficiently large.   Hence, $F_n\equiv 0$.

Similarly we can show $F_{n-1}\equiv 0,\ldots,F_0\equiv 0$, inductively.   The proof
is complete.

\section{A generalization}

The mixed joint universality and the mixed joint functional independence theorem
can be obtained in the following more general situation.

Suppose that $\alpha_j$ be a real number with $0<\alpha_j<1$,
$l(j)\in\mathbb{N}$, $j=1,\ldots,r$, and $\lambda=l(1)+\ldots+l(r)$.
For each $j$ and $l$, $1\leq j\leq r$, $1\leq l\leq l(j)$, let
${\mathfrak{B}}_{jl}=\{b_{mjl}\in\mathbb{C}: m \in \mathbb{N}_0\}$ be a periodic
sequence of complex numbers (not all zero) with the minimal period $k_{jl}$, and let
$\zeta(s,\alpha_j;{\mathfrak{B}}_{jl})$ be the corresponding periodic Hurwitz zeta-function.
Denote by $k_j$ the least common multiple of periods $k_{j1},\ldots,k_{jl(j)}$.
Let $B_j$ be the matrix consisting of coefficients $b_{mjl}$ from the periodic sequences
${\mathfrak{B}}_{jl}$, $l=1,\ldots,l(j)$, $m=1,\ldots,k_j$, i.e.,
$$
B_j=\begin{pmatrix}
b_{1j1}  & b_{1j2} & \ldots & b_{1jl(j)}\cr
b_{2j1}  & b_{2j2} & \ldots & b_{2jl(j)}\cr
\ldots & \ldots & \ldots  & \ldots \cr
b_{k_j j1}  & b_{k_j j2} & \ldots & b_{k_j jl(j)}\cr
\end{pmatrix}.
$$

The functional independence for the above collection of periodic Hurwitz zeta-func\-tions
was proved by A.~Laurin\v cikas in \cite[Theorem~3]{AL-08} under a certain matrix
condition.
The proof is based on the joint universality theorem among periodic Hurwitz
zeta-functions proved by J.~Steuding in \cite{JSt-03}.

For the proof of mixed joint functional independence, we may adopt the method de\-ve\-lo\-ped in series of works by A.~Laurin\v cikas and his colleagues (see, for example, \cite{JG-RM-SR-DS-10}, \cite{AL-DS-12}, \cite{VP-DS-14}). Then it is possible to obtain the following
generalization of Theorem~\ref{km-thm-1}.

\begin{theorem}\label{km-thm-2}
Suppose that $\alpha_1,\ldots,\alpha_r$ are algebraically independent over ${\mathbb{Q}}$, ${\rm rank}B_j=l(j)$, $1\leq j\leq r$, and $\varphi(s)$ belongs to the class ${\widetilde S}$. Let the function $F_j: {\mathbb{C}}^{N(\lambda+1)}\to {\mathbb{C}}$ be a continuous function for each $j=0,\ldots,n$. Suppose that the function
\begin{align*}
&\sum_{j=0}^{n}s^j \cdot F_j\biggl(
\varphi(s),\varphi'(s),\ldots,\varphi^{(N-1)}(s),\\
&\qquad \qquad
\zeta(s,\alpha_1;{\mathfrak{B}}_{11}),\zeta'(s,\alpha_1;{\mathfrak{B}}_{11}),\ldots,\zeta^{(N-1)}(s,\alpha_1;{\mathfrak{B}}_{11}),\ldots,\\
& \qquad\qquad
\zeta(s,\alpha_1;{\mathfrak{B}}_{1l(1)}),\zeta'(s,\alpha_1;{\mathfrak{B}}_{1l(1)}),\ldots,\zeta^{(N-1)}(s,\alpha_1;{\mathfrak{B}}_{1l(1)}),\ldots,\\
& \qquad\qquad
\zeta(s,\alpha_r;{\mathfrak{B}}_{r1}),\zeta'(s,\alpha_r;{\mathfrak{B}}_{r1}),\ldots,\zeta^{(N-1)}(s,\alpha_r;{\mathfrak{B}}_{r1}),\ldots,\\
& \qquad\qquad
\zeta(s,\alpha_r;{\mathfrak{B}}_{rl(r)}),\zeta'(s,\alpha_r;{\mathfrak{B}}_{rl(r)}),\ldots,\zeta^{(N-1)}(s,\alpha_r;{\mathfrak{B}}_{rl(r)})
\biggr)
\end{align*}
is identically zero. Then $F_j\equiv 0$ for $j=0,\ldots,n$.
\end{theorem}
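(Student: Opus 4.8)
The plan is to follow exactly the three-step architecture used for Theorem~\ref{km-thm-1}, replacing the single periodic Hurwitz zeta-function by the whole family $\zeta(s,\alpha_j;{\mathfrak{B}}_{jl})$ and adjusting the inputs accordingly. First, one needs a \emph{generalized mixed joint universality theorem}: under the hypotheses that $\alpha_1,\dots,\alpha_r$ are algebraically independent over $\qq$, that $\mathrm{rank}\,B_j=l(j)$ for each $j$, and that $\varphi\in\widetilde S$, for any compact $K_0\subset D(\sigma^*,1)$ with connected complement and any compact $K_{jl}\subset D(\tfrac12,1)$ with connected complement, and any $f_0\in H_0^c(K_0)$, $f_{jl}\in H^c(K_{jl})$, the set of $\tau$ for which $\varphi(s+i\tau)$ approximates $f_0$ on $K_0$ and each $\zeta(s+i\tau,\alpha_j;{\mathfrak{B}}_{jl})$ approximates $f_{jl}$ on $K_{jl}$ has positive lower density. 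This is proved through a generalized joint limit theorem in the space of analytic functions: the main point is that the support of the limiting measure is the full product space, which is where the rank condition $\mathrm{rank}\,B_j=l(j)$ and the algebraic independence of the $\alpha_j$ enter — the rank condition guarantees that, for fixed $j$, the functions $\zeta(s,\alpha_j;{\mathfrak{B}}_{j1}),\dots,\zeta(s,\alpha_j;{\mathfrak{B}}_{jl(j)})$ are ``jointly free'' in the relevant sense, exactly as in Laurin\v cikas's \cite[Theorem~3]{AL-08} and the works \cite{JG-RM-SR-DS-10,AL-DS-12,VP-DS-14}, while the algebraic independence of the $\alpha_j$ decouples the different blocks, and $\varphi\in\widetilde S$ contributes its Euler-product block through the Steuding machinery as in \cite{RK-KM-15}. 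The positivity of density then follows from the Mergelyan approximation argument in the standard way.

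Second, one establishes the \emph{denseness lemma} generalizing Lemma~\ref{km-lem-2}: fixing $\sigma\in(\tfrac12,1)$ in the overlap $D(\sigma^*,1)\cap D(\tfrac12,1)$ and defining the map
$$
u(t)=\bigl(\varphi(\sigma+it),\dots,\varphi^{(N-1)}(\sigma+it),\ \dots,\ \zeta(\sigma+it,\alpha_r;{\mathfrak{B}}_{rl(r)}),\dots,\zeta^{(N-1)}(\sigma+it,\alpha_r;{\mathfrak{B}}_{rl(r)})\bigr)\in\cc^{N(\lambda+1)},
$$
the image $u(\rr)$ is dense in $\cc^{N(\lambda+1)}$. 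The proof is the same Taylor-polynomial trick as in Lemma~\ref{km-lem-2}: prescribe target values $s_{ij}$ for $i$ indexing the $\lambda+1$ functions and $j=0,\dots,N-1$, form for each function the polynomial $p_i(s)=\sum_{j=0}^{N-1}\frac{s_{ij}}{j!}s^j$ whose first $N-1$ derivatives at $0$ are the prescribed values, apply the generalized universality theorem with small compact neighbourhoods of $0$ (shifted into the strip) and the $f$'s taken to be these polynomials (taking $f_0$ non-vanishing by adjusting a constant if necessary, or by first translating the target), and read off a sequence $\tau_m\to\infty$ along which $u(\tau_m)$ converges to the prescribed point.

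Third, the \emph{functional independence itself} follows from the denseness lemma by Voronin's induction, verbatim as in the proof of Theorem~\ref{km-thm-1}: if $F_n\not\equiv 0$ there is a point $\underline a\in\cc^{N(\lambda+1)}$ with $F_n(\underline a)\neq 0$, hence by continuity a bounded domain $G\ni\underline a$ on which $|F_n|\geq c>0$; the denseness lemma supplies arbitrarily large $\tau$ with $u(\tau)\in G$, and along such $\tau$ the identity $\sum_{j=0}^n (\sigma+i\tau)^j F_j(u(\tau))\equiv 0$ forces $|F_n(u(\tau))|$ to be dominated by $C|\tau|^{-1}\sum_{j<n}\sup_G|F_j|$, which tends to $0$, contradicting $|F_n|\geq c$; then subtract $s^n F_n\equiv 0$ and induct downward to get $F_{n-1}\equiv\cdots\equiv F_0\equiv 0$. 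The main obstacle is entirely in the first step, namely verifying that the support of the limit measure in the generalized joint limit theorem is the full product of analytic-function spaces; concretely this reduces to checking linear independence/denseness statements for the Dirichlet-series building blocks $(l+\alpha_j)^{-s}$ across all $j$ and $l$, for which the rank condition on $B_j$ (handling the coincidence of a single $\alpha_j$ among several sequences ${\mathfrak{B}}_{jl}$) and the algebraic independence of $\alpha_1,\dots,\alpha_r$ (handling distinct parameters) are precisely the hypotheses that make the argument go through, exactly along the lines of \cite{AL-08} combined with the mixed-universality technique of \cite{RK-KM-15}; once that is in place, steps two and three are routine adaptations.
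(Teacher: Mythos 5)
Your proposal follows essentially the same route as the paper: Theorem~\ref{km-thm-2} is deduced from the generalized mixed joint universality statement (Theorem~\ref{km-thm-3}), which in turn rests on the generalized limit theorem (Lemma~\ref{thm-rk}, where only the algebraic independence of the $\alpha_j$ is used) together with the support lemma (Lemma~\ref{lem-support}, where the rank conditions ${\rm rank}\,B_j=l(j)$ enter), and then on the $(\lambda+1)N$-dimensional denseness lemma and Voronin's downward induction exactly as in Theorem~\ref{km-thm-1}. You have correctly located where each hypothesis is used, and your level of detail matches (indeed slightly exceeds) the paper's own sketch, so no further comparison is needed.
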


This theorem is a consequence of the following mixed joint universality theorem, which is
a generalization of Lemma \ref{km-lem-1}.
This theorem is also an analogue of a result of J.~Genys, R.~Macaitien{\.e},
S.~Ra{\v c}kauskien{\.e} and D.~{\v S}iau{\v c}i{\=u}nas
\cite[Theorem 3]{JG-RM-SR-DS-10}, which treats the case that $\varphi(s)$ is replaced by
$\zeta(s)$.

\begin{theorem}\label{km-thm-3}
Suppose that $\alpha_1,\ldots,\alpha_r$ are algebraically independent over ${\mathbb{Q}}$, ${\rm rank}B_j=l(j)$, $1\leq j\leq r$, and $\varphi(s)$ belongs to the class
${\widetilde S}$.
Let $K_1$ be a compact subset of $D(\sigma^*,1)$, $K_{2jl}$ be compact subsets of
$D(\frac{1}{2},1)$,
all of which with connected complements.
Suppose that $f_1\in H_0^c(K_1)$ and $f_{2jl}\in H^c(K_{2jl})$.
Then, for every $\varepsilon>0$,
\begin{align*}
\liminf\limits_{T \to \infty}\frac{1}{T}\mu\bigg\{\tau\in [0,T]: &\;
\sup\limits_{s \in K_1}|\varphi(s+i\tau)-f_1(s)|<\varepsilon, \\
& \;\max\limits_{1\leq j\leq r}\max\limits_{1\leq l\leq l(j)}
\sup\limits_{s\in K_{2jl}}|\zeta(s+i\tau,\alpha_j;{\mathfrak{B}}_{jl})-f_{2jl}(s)|<
\varepsilon\bigg\}>0.
\end{align*}
\end{theorem}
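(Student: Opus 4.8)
The plan is to deduce Theorem~\ref{km-thm-3} from a joint limit theorem in the space of analytic functions, following the pattern of \cite{RK-KM-15} and \cite{JG-RM-SR-DS-10}; the only genuinely new bookkeeping is that forced by the several parameters $\alpha_j$ and by the several sequences ${\mathfrak B}_{jl}$ attached to a common $\alpha_j$. Write $\lambda=l(1)+\ldots+l(r)$, set $D_1=D(\sigma^*,1)$, $D_2=D(\tfrac12,1)$, and let ${\mathcal H}=H(D_1)\times H(D_2)^{\lambda}$ carry the topology of uniform convergence on compacta. For $\tau\in{\mathbb R}$ put
$$
\Phi(\tau)=\Bigl(\varphi(s+i\tau),\bigl(\zeta(s+i\tau,\alpha_j;{\mathfrak B}_{jl})\bigr)_{1\le j\le r,\,1\le l\le l(j)}\Bigr)\in{\mathcal H},
$$
and let $P_T$ be the probability measure $P_T(A)=\frac1T\mu\{\tau\in[0,T]:\Phi(\tau)\in A\}$. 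The first step is to prove that $P_T$ converges weakly, as $T\to\infty$, to the law $P$ of the ${\mathcal H}$-valued random element
$$
\Bigl(\varphi(s,\omega_1),\bigl(\zeta(s,\alpha_j;{\mathfrak B}_{jl},\omega_{2,j})\bigr)_{j,l}\Bigr),
$$
where $\omega=(\omega_1,\omega_{2,1},\ldots,\omega_{2,r})$ is Haar-distributed on the compact group $\Omega=\Omega_1\times\prod_{j=1}^{r}\Omega_{2,j}$, with $\Omega_1=\prod_{p\in{\mathbb P}}\{|z|=1\}$ and $\Omega_{2,j}=\prod_{m\in{\mathbb N}_0}\{|z|=1\}$, and $\varphi(s,\omega_1)$, $\zeta(s,\alpha_j;{\mathfrak B}_{jl},\omega_{2,j})$ are the usual twisted Euler product and twisted Dirichlet series. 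This limit theorem is obtained along the established route: equidistribution of $\{(p^{-i\tau})_p,((m+\alpha_j)^{-i\tau})_{m,j}\}$ in $\Omega$, which here rests on the fact that the algebraic independence of $\alpha_1,\ldots,\alpha_r$ over ${\mathbb Q}$ forces $\{\log p:p\in{\mathbb P}\}\cup\{\log(m+\alpha_j):m\in{\mathbb N}_0,\ 1\le j\le r\}$ to be linearly independent over ${\mathbb Q}$; a limit theorem for the truncated Dirichlet polynomials; and passage to the limit using the mean-square bounds valid for $\varphi\in{\widetilde S}$ on $\sigma>\sigma^*$ (which is exactly what the definition of $\sigma^*$ gives) together with condition (iii), and the corresponding bounds for periodic Hurwitz zeta-functions on $\sigma>\tfrac12$. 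This is the analogue, with several periodic Hurwitz zeta-functions, of \cite[Theorem~2.1]{RK-KM-15}.

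The decisive step is the identification of $\operatorname{supp}P$. Since the $\alpha_j$ are algebraically independent, $\Omega$ is a direct product and $m_H$ the product measure, so the blocks $\omega_1,\omega_{2,1},\ldots,\omega_{2,r}$ are mutually independent; as each block affects only its own component of $\Phi$, $\operatorname{supp}P=S_1\times\prod_{j=1}^{r}S_{2,j}$, where $S_1$ is the support of the law of $\varphi(s,\omega_1)$ and $S_{2,j}\subset H(D_2)^{l(j)}$ that of the law of $\bigl(\zeta(s,\alpha_j;{\mathfrak B}_{jl},\omega_{2,j})\bigr)_{l}$. For the Euler-product block one has, by the standard argument for the Steuding class ${\widetilde S}$ (conditions (i), (iv), (v), and Hurwitz's theorem at the end), as in \cite{JSt-07}, $S_1=\{g\in H(D_1):g(s)\ne0\text{ on }D_1\}\cup\{0\}$. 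For each fixed $j$ one must show $S_{2,j}=H(D_2)^{l(j)}$, and this is precisely where $\operatorname{rank}\,B_j=l(j)$ enters: it is the multi-dimensional analogue of the support computation behind \cite[Theorem~3]{AL-08} and \cite{JSt-03}, the rank condition guaranteeing that the vectors $\bigl(b_{mj1}(m+\alpha_j)^{-s},\ldots,b_{mjl(j)}(m+\alpha_j)^{-s}\bigr)$, $m\in{\mathbb N}_0$, are not confined to a proper closed subspace of the relevant Hilbert space, so that a Pechersky-type rearrangement theorem yields denseness onto all of $H(D_2)^{l(j)}$; without it the $l(j)$ target functions could not be matched independently.

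Granting these two steps, the conclusion is routine. Given the compact sets and target functions of the statement and $\varepsilon>0$, Mergelyan's theorem, together with the connectedness of the complements and the non-vanishing of $f_1$, produces polynomials $p_1(s)=e^{q_1(s)}$ and $p_{2jl}(s)$ with $\sup_{K_1}|p_1-f_1|<\varepsilon/2$ and $\sup_{K_{2jl}}|p_{2jl}-f_{2jl}|<\varepsilon/2$; here $p_1$ is zero-free, so $\bigl(p_1,(p_{2jl})_{j,l}\bigr)\in\operatorname{supp}P$ by the previous paragraph. The set
$$
G=\Bigl\{(g_1,(g_{2jl}))\in{\mathcal H}:\sup_{K_1}|g_1-p_1|<\tfrac\varepsilon2,\ \sup_{K_{2jl}}|g_{2jl}-p_{2jl}|<\tfrac\varepsilon2\ \forall j,l\Bigr\}
$$
is open and meets $\operatorname{supp}P$, hence $P(G)>0$, and by weak convergence and the portmanteau theorem $\liminf_{T\to\infty}P_T(G)\ge P(G)>0$. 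Since $\Phi(\tau)\in G$ implies, by the triangle inequality, $\sup_{K_1}|\varphi(s+i\tau)-f_1(s)|<\varepsilon$ and $\sup_{K_{2jl}}|\zeta(s+i\tau,\alpha_j;{\mathfrak B}_{jl})-f_{2jl}(s)|<\varepsilon$ for all $j,l$, the $\liminf$ in the statement is bounded below by $P(G)>0$, proving the theorem. The main obstacle is the support computation of the second step — extracting from $\operatorname{rank}\,B_j=l(j)$ the denseness of the twisted periodic Hurwitz series onto the full product space $H(D_2)^{l(j)}$ — whereas the limit theorem of the first step, though technical, follows the established pattern of \cite{RK-KM-15}.
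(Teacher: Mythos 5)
Your proposal is correct and follows essentially the same route as the paper: a mixed joint limit theorem on $H(D_1)\times H(D_2)^{\lambda}$ (the paper's Lemma~\ref{thm-rk}, proved via truncation, the linear independence over $\mathbb{Q}$ coming from the algebraic independence of the $\alpha_j$, and mean-square approximation), identification of the support as $S_{\varphi}\times H(D_2)^{\lambda}$ using the rank condition (the paper's Lemma~\ref{lem-support}, cited from the analogue in Genys--Macaitien\.e--Ra\v ckauskien\.e--\v Siau\v ci\=unas), and then the standard Mergelyan--portmanteau argument. The paper leaves the last step and the support computation to references, while you spell them out, but the structure and the role of each hypothesis are identical.
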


\begin{remark}
Consider the case when all $l(j)=1$, $1\leq j\leq r$.
Write $\mathfrak{B}_{j1}=\mathfrak{B}_j$.
In this case, the condition ${\rm rank}B_j=l(j)$ trivially holds.
Therefore the joint universality and the functional independence among the functions
$\varphi(s)$,
$\zeta(s,\alpha_1,\mathfrak{B}_1),\ldots,\zeta(s,\alpha_r,\mathfrak{B}_r)$ are valid
without any matrix condition, only under the assumption that
$\alpha_1,\ldots,\alpha_r$ are algebraically independent over ${\mathbb{Q}}$.
When A.~Laurin{\v c}ikas started his study on the universality of periodic Hurwitz
zeta-functions, he assumed various matrix-type conditions, but finally,
the joint universality
among periodic Hurwitz zeta-functions without any matrix condition was established
by A.~Laurin{\v c}ikas and S.~Skerstonait{\.e} in \cite[Theorem~3]{AL-SS-09a}.
Our Theo\-rems \ref{km-thm-2} and \ref{km-thm-3} include the ``mixed''
generalization of this theorem.
\end{remark}

\section{Proof of Theorems~\ref{km-thm-2} and \ref{km-thm-3}}

In the proof of Theorems~\ref{km-thm-2} and \ref{km-thm-3}, the crucial role is
played by a mixed joint limit theorem in the sense of weakly convergent probability measures in the space of analytic functions.

\subsection{A generalized mixed joint limit theorem}

Let $D_1$ be an open subset of $D(\sigma^*,1)$, and $D_2$ be an open subset of
$D(\frac{1}{2},1)$.
For any set $S$, by ${\mathcal{B}}(S)$ we denote the set of all Borel subsets of $S$.
For any region $D$, denote by $H(D)$ the set of all holomorphic functions on $D$.
Let
${\underline H}$ be the Cartesian product of $\lambda+1$ such spaces, i.e.
$$
{\underline H}=H(D_1)\times \underbrace{H(D_2)\times ...\times H(D_2)}\limits_{\lambda}.
$$
Moreover, let
$$
\Omega_1=\prod\limits_{p\in\mathbb{P}}\gamma_p \quad \text{and}\quad \Omega_2=\prod\limits_{m=0}^{\infty}\gamma_m,
$$
where $\gamma_p=\gamma$ for all $p\in \mathbb{P}$, $\gamma_m=\gamma$ for all
$m \in \mathbb{N}_0$, and $\gamma=\{s\in \mathbb{C}: |s|=1\}$,
and define
$$
{\underline \Omega}=\Omega_1\times \Omega_{21}\times ... \times \Omega_{2 r}
$$
where $\Omega_{2j}=\Omega_2$ for all $j=1,...,r$. Then by the Tikhonov theorem ${\underline \Omega}$ is a compact topological Abelian group also. Then we have the probability space $({\underline \Omega},{\mathcal B}({\underline \Omega}), {\underline m}_H)$. Here ${\underline m}_H$ is the  product of Haar measures $m_{H1}, m_{H21},...,m_{H2r}$,
where $m_{H1}$ is the probability Haar measure on $(\Omega_1,{\mathcal{B}}(\Omega_1))$ and
$m_{H2j}$ is the probability Haar measure on $(\Omega_{2j},{\mathcal{B}}(\Omega_{2j}))$, $j=1,...,r$. Let $\omega_1(p)$ be the projection of $\omega_1 \in \Omega_1$ to $\gamma_p$, and, for every $m \in \mathbb{N}$, define
$$
\omega_1(m)=\prod_{p^\alpha \| m}\omega_1(p)^\alpha
$$
with respect to the factorization of $m$ to primes. Denote by $\omega_{2j}(m)$ the projection of $\omega_{2j} \in \Omega_{2j}$ to $\gamma_m$, $m \in \nn_0$, $j=1,...,r$.

For brevity, write ${\underline \alpha}=\left(\alpha_1,\ldots,\alpha_{r}\right)$,
$\underline{\mathfrak{B}}=\left(\mathfrak{B}_{11},\ldots, \mathfrak{B}_{1l(1)},
\cdots,\mathfrak{B}_{r1},\cdots,\mathfrak{B}_{rl(r)}\right)$,
$\underline{s}=(s_1,s_{211},\ldots,s_{21l(1)},\ldots,s_{2r1},\ldots,s_{2rl(r)})
\in\mathbb{C}^{\lambda+1}$,
and
\begin{align*}
&\lefteqn{\underline{Z}(\underline{s},\underline{\alpha};\underline{\mathfrak{B}})=
(\varphi(s_1),\zeta(s_{211},\alpha_1;{\mathfrak{B}}_{11}),\ldots,
\zeta(s_{21l(1)},\alpha_1;{\mathfrak{B}}_{1l(1)}),}\\
&\qquad\ldots,
\zeta(s_{2r1},\alpha_r;{\mathfrak{B}}_{r1}),\ldots,
\zeta(s_{2rl(r)},\alpha_r;{\mathfrak{B}}_{rl(r)})
).
\end{align*}

Let ${\underline \omega}=\left(\omega_1,\omega_{21},...,\omega_{2r}\right)
\in{\underline \Omega}$.
Define the ${\underline H}$-valued random element
$\underline{Z}(\underline{s},\underline{\alpha},{\underline\omega};\underline{\mathfrak{B}})$ on the probability space $({\underline \Omega},{\mathcal{B}}({\underline \Omega}),{\underline m}_H)$ by the formula
\begin{align*}
&\underline{Z}(\underline{s},\underline{\alpha},{\underline \omega};\underline{\mathfrak{B}})=
(\varphi(s_1,\omega_1),\zeta(s_{211},\alpha_1,\omega_{21};\mathfrak{B}_{11}),\ldots,
\zeta(s_{21l(1)},\alpha_1,\omega_{21};\mathfrak{B}_{1l(1)}),\\
&\qquad\ldots,
\zeta(s_{2r1},\alpha_r,\omega_{2r};\mathfrak{B}_{r1}),\ldots,
\zeta(s_{2rl(r)},\alpha_{r},\omega_{2r};\mathfrak{B}_{rl(r)})),
\end{align*}
where
$$
\varphi(s,\omega_1)=\sum_{m=1}^{\infty}\frac{c_m\omega_1(m)}{{m^s}}, \quad s
\in D(\sigma^{*},1),
$$
and
$$
\zeta(s,\alpha_j,\omega_{2j};\mathfrak{B}_{jl})=\sum_{m=0}^{\infty}\frac{b_{mjl}\omega_{2j}(m)}{(m+\alpha_j)^s}, \quad s \in D\big(\frac{1}{2},1\big), \quad j=1,...,r, \quad l=1,...,l(r),
$$
respectively.    These series are convergent for almost all $\omega_1 \in \Omega_1$ and
$\omega_{2j} \in \Omega_{2j}$, $j=1,...,r$.
Denote by $P_{\underline{Z}}$ the distribution of the random element
$\underline{Z}(\underline{s},\underline{\alpha},{\underline \omega};
\underline{\mathfrak{B}})$, i.e.,
$$
P_{\underline{Z}}(A)={\underline m}_H\left({\underline \omega} \in {\underline \Omega}:
\underline{Z}(\underline{s},{\underline \alpha},{\underline\omega};{\underline{\mathfrak{B}}}) \in A\right), \quad A \in {\mathcal{B}}({\underline H}).
$$

Now we are ready to state our mixed joint limit theorem for the functions
$\varphi(s)$, $\zeta(s,\alpha_1;{\mathfrak{B}}_{11}),$ $\ldots,
\zeta(s,\alpha_r;{\mathfrak{B}}_{rl(r)})$ as a lemma.

\begin{lemma}\label{thm-rk}
Suppose that the numbers $\alpha_1,...,\alpha_{r}$ are algebraically independent over
$\mathbb{Q}$, 
and $\varphi\in {\widetilde S}$.
Then the probability measure $P_T$ defined by
\begin{align}\label{def-P_T}
P_T(A)=\frac{1}{T}{\rm meas}\left\{\tau \in [0,T]: \underline{Z}(\underline{s}+i\tau,\underline{\alpha};\underline{\mathfrak{B}})  \in A\right\},\quad A \in {\mathcal{B}}({\underline H}),
\end{align}
converges weakly to $P_{\underline{Z}}$ as $T\to \infty$.
\end{lemma}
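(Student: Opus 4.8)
The plan is to run the by-now standard five-step scheme for limit theorems of Dirichlet series in the space of analytic functions (as carried out for the single periodic Hurwitz case in \cite{RK-KM-15}), the only genuinely new ingredient being the arithmetic of the enlarged system of frequencies. \textbf{Step 1} is a limit theorem on the group $\underline\Omega$. First I would show that
$$
\{\log p:p\in\pp\}\cup\{\log(m+\alpha_j):m\in\n0,\ 1\le j\le r\}
$$
is linearly independent over $\qq$: a rational relation, once exponentiated and with denominators cleared, becomes a polynomial identity in $\alpha_1,\dots,\alpha_r$ with rational coefficients, so algebraic independence of the $\alpha_j$ kills the periodic-Hurwitz exponents, and then unique factorization kills the prime exponents. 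By the Weyl criterion this forces the $\rr$-flow
$$
\tau\mapsto\bigl((p^{-i\tau})_{p\in\pp},\,((m+\alpha_1)^{-i\tau})_{m\ge0},\,\dots,\,((m+\alpha_r)^{-i\tau})_{m\ge0}\bigr)
$$
to be uniformly distributed in $\underline\Omega$, so that the measures $\frac1T\meas\{\tau\in[0,T]:(\cdots)\in A\}$ converge weakly to the Haar measure $\underline m_H$ as $T\to\infty$.

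\textbf{Steps 2 and 3} pass from the torus back to the Dirichlet series. For $n\in\nn$ let $\varphi_n(s,\omega_1)$ and $\zeta_n(s,\alpha_j,\omega_{2j};\mathfrak{B}_{jl})$ be the absolutely convergent series obtained by inserting a smoothing factor $\exp(-(m/n)^{\theta})$ in the $m$-th term, with the analytic-function tuple $\underline Z_n$ and its shift $\underline Z_n(\underline s+i\tau,\cdot)$ defined accordingly. For each fixed $n$ the passage from a point of $\underline\Omega$ to the corresponding element of $\underline H$ is continuous, so Step 1 gives that the distribution of $\underline Z_n(\underline s+i\tau,\underline\alpha;\underline{\mathfrak{B}})$ converges weakly, as $T\to\infty$, to the pushforward $P_{\underline Z_n}$ of $\underline m_H$. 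Then I would prove the approximation estimate
$$
\lim_{n\to\infty}\limsup_{T\to\infty}\frac1T\int_0^T\rho\bigl(\underline Z(\underline s+i\tau,\underline\alpha;\underline{\mathfrak{B}}),\underline Z_n(\underline s+i\tau,\underline\alpha;\underline{\mathfrak{B}})\bigr)\,d\tau=0
$$
for a metric $\rho$ inducing the topology of $\underline H$. Since $\rho$ splits coordinatewise, this reduces to the individual functions: the periodic Hurwitz coordinates are treated exactly as in \cite{RK-KM-15} and \cite{JG-RM-SR-DS-10} (Mellin representation plus the mean-square bound on vertical lines in $D(\tfrac12,1)$), while the $\varphi$-coordinate uses properties (i)--(iii) of $\widetilde S$ and the definition of $\sigma^*$: a Gallagher-type averaging lemma bounds the relevant mean by $\int_{-T}^{T}|\varphi(\sigma+it)|^2\,dt\ll\sum_m|a(m)|^2m^{-2\sigma}$, and a shift of contour to a line just above $\sigma^*$ yields the decay in $n$.

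\textbf{Step 4} is the analogous estimate on $\underline\Omega$,
$$
\lim_{n\to\infty}\int_{\underline\Omega}\rho\bigl(\underline Z(\underline s,\underline\alpha,\underline\omega;\underline{\mathfrak{B}}),\underline Z_n(\underline s,\underline\alpha,\underline\omega;\underline{\mathfrak{B}})\bigr)\,d\underline m_H(\underline\omega)=0,
$$
obtained the same way but invoking the strong law of large numbers and ergodicity for the independent coordinate families $\{\omega_1(p)\}_p$ and $\{\omega_{2j}(m)\}_m$ (and Menshov--Rademacher for the a.s.\ convergence of the randomized series in $D(\tfrac12,1)$, resp.\ in $D(\sigma^*,1)$); this gives $P_{\underline Z_n}\Rightarrow P_{\underline Z}$ as $n\to\infty$, which also identifies the limit. \textbf{Step 5} combines Steps 2--4 by the standard approximation argument for weak convergence used in \cite{RK-KM-15}: $P_{\underline Z_n}\Rightarrow P_{\underline Z}$, each smoothed measure converges weakly as $T\to\infty$ to $P_{\underline Z_n}$, and by Step 3 together with Markov's inequality $\underline Z_n(\underline s+i\tau,\cdot)$ is $\rho$-close to $\underline Z(\underline s+i\tau,\cdot)$ in probability uniformly in $T$; hence $P_T\Rightarrow P_{\underline Z}$.

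The main obstacle I expect is Step 3 for the $\varphi$-coordinate: producing a mean-value estimate for $\varphi(s+i\tau)$ on compact subsets of $D(\sigma^*,1)$ that is simultaneously uniform in $T$ and decaying in $n$, since this is precisely where the rather weak axioms of the class $\widetilde S$ --- only the polynomial bound (iii) and the $L^2$-asymptotic defining $\sigma^*$, with no functional equation available --- must be exploited. By contrast the mixed/joint feature is harmless: once Step 1 furnishes equidistribution in the full product group, the metric, the smoothing and the a.s.\ convergence all decouple over the $\lambda+1$ coordinates, so no new coupling between $\varphi$ and the periodic Hurwitz functions, or among the latter, can arise.
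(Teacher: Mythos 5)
Your proposal is correct and follows essentially the same route as the paper, which itself only sketches the argument: the torus limit theorem via the linear independence over $\qq$ of $\{\log p\}\cup\{\log(m+\alpha_j)\}$ (this is Lemma~1 of \cite{JG-RM-SR-DS-10}, resting on the algebraic independence of $\alpha_1,\ldots,\alpha_r$), the smoothed approximations $\varphi_n$, $\zeta_n$, the two approximation-in-the-mean estimates, and the identification of the limit by the ergodic argument. The only point worth flagging is that the paper obtains the mean-value bounds for the periodic Hurwitz coordinates from \cite[(2.3),(2.5)]{AL-SS-09b} rather than from \cite{AJ-AL-06}, but this is a detail within the same scheme.
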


When $r=1$ and $l(1)=1$, this lemma is Theorem 2.1 from \cite{RK-KM-15}. 
On the other hand, the same type of limit theorem with $\varphi$ replaced by
the Riemann zeta-function is given in \cite[Theorem 4]{JG-RM-SR-DS-10}.
The proof of the above lemma is quite similar to that of those results, so here we
give a very brief outline.

Let $\varphi_n(s)$, $\varphi_n(s,\widehat{\omega}_1)$,
$\zeta_n(s,\alpha;\mathfrak{B})$, $\zeta_n(s,\alpha,\widehat{\omega}_2;\mathfrak{B})$
be the same as in \cite[Section 3]{RK-KM-15}, and define
\begin{align*}
&\lefteqn{\underline{Z}_n(\underline{s},\underline{\alpha};\underline{\mathfrak{B}})=
(\varphi_n(s_1),\zeta_n(s_{211},\alpha_1;{\mathfrak{B}}_{11}),\ldots,
\zeta_n(s_{21l(1)},\alpha_1;{\mathfrak{B}}_{1l(1)}),}\\
&\qquad\qquad \qquad\ldots,
\zeta_n(s_{2r1},\alpha_r;{\mathfrak{B}}_{r1}),\ldots,
\zeta_n(s_{2rl(r)},\alpha_r;{\mathfrak{B}}_{rl(r)}))
\end{align*}
and
\begin{align*}
&\underline{Z}_n(\underline{s},\underline{\alpha},{\underline{ \widehat\omega}};\underline{\mathfrak{B}})=
(\varphi_n(s_1,\widehat{\omega}_1),\zeta_n(s_{211},\alpha_1,\widehat{\omega}_{21};
\mathfrak{B}_{11}),\ldots,
\zeta_n(s_{21l(1)},\alpha_1,\widehat{\omega}_{21};\mathfrak{B}_{1l(1)}),\\
&\qquad\qquad \qquad\quad \ldots,
\zeta_n(s_{2r1},\alpha_r,\widehat{\omega}_{2r};\mathfrak{B}_{r1}),\ldots,
\zeta_n(s_{2rl(r)},\alpha_{r},\widehat{\omega}_{2r};\mathfrak{B}_{rl(r)})),
\end{align*}
where ${\underline{ \widehat\omega}}=(\widehat{\omega}_1,\widehat{\omega}_{21},
\ldots,\widehat{\omega}_{2r})\in\underline{\Omega}$.
The probability measures $P_{T,n}$, $\widehat{P}_{T,n}$ and $\widehat{P}_T$ on
$\underline{H}$ are defined similarly as
$P_T$ in \eqref{def-P_T}, with replacing
$\underline{Z}(\underline{s}+i\tau,\underline{\alpha};\underline{\mathfrak{B}})$ by
$\underline{Z}_n(\underline{s}+i\tau,\underline{\alpha};\underline{\mathfrak{B}})$,
$\underline{Z}_n(\underline{s}+i\tau,\underline{\alpha},{\underline{ \widehat\omega}};
\underline{\mathfrak{B}})$ and
$\underline{Z}(\underline{s}+i\tau,\underline{\alpha},{\underline{ \widehat\omega}};
\underline{\mathfrak{B}})$,
respectively.
As an analogue of \cite[Lemma~3]{RK-KM-15} or \cite[Lemma~2]{JG-RM-SR-DS-10}, we can
show that both measures $P_{T,n}$ and $\widehat{P}_{T,n}$ converge weakly to a certain
probability measure $P_n$ as $T\to\infty$.
(In the course of the proof, a key is Lemma~1 from \cite{JG-RM-SR-DS-10}, which is based
on the assumption that $\alpha_1,...,\alpha_{r}$ are algebraically independent over
$\mathbb{Q}$.)

Next we need an approximation lemma in the mean value sense, similar to
\cite[Lem\-ma~3.3]{RK-KM-15}.
In the proof of \cite[Lem\-ma~3.3]{RK-KM-15}, we used the result of \cite{AJ-AL-06}.
The desired approxi\-mation can be shown by using, instead of \cite{AJ-AL-06}, mean value
results given in \cite[(2.3), (2.5)]{AL-SS-09b}.

Then we can prove that both $P_T$ and $\widehat{P}_T$ converge weakly to a certain
probability measure $P$.    This is an analogue of \cite[Lemma~3.4]{RK-KM-15} or
\cite[Lemma~5]{JG-RM-SR-DS-10}.
Finally we can show that $P=P_{\underline{Z}}$, by the usual ergodic argument.
This completes the proof of Lemma \ref{thm-rk}.

\subsection{Completion of the proof}

Now we complete the proof of Theorems \ref{km-thm-2} and \ref{km-thm-3}.
Hereafter we assume that $\alpha_1,\ldots,\alpha_r$ are algebraically independent over
${\mathbb{Q}}$, ${\rm rank}B_j=l(j)$, $1\leq j\leq r$, and $\varphi(s)$ belongs to
the class ${\widetilde S}$.

Let $S_{\varphi}$ be the set of all $f\in H(D_1)$ which is non-vanishing on $D_1$,
or constantly $\equiv 0$ on $D_1$.

\begin{lemma}\label{lem-support}
The support of the measure $P_{\underline{Z}}$ is $S_{\varphi}\times H(D_2)^{\lambda}$.
\end{lemma}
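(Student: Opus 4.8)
The plan is to determine the support of $P_{\underline{Z}}$ as a product: since $P_{\underline{Z}}$ is the distribution of an $\underline{H}$-valued random element whose components are built from \emph{independent} Haar-random objects --- $\omega_1\in\Omega_1$ driving $\varphi(s_1,\omega_1)$, and $\omega_{21},\dots,\omega_{2r}$ driving the periodic Hurwitz components --- the measure $P_{\underline{Z}}$ factors as a product of the marginal distributions, and the support of a product measure is the product of the supports of the factors. So it suffices to identify (a) the support of the distribution of $\omega_1\mapsto\varphi(s_1,\omega_1)$ in $H(D_1)$, and (b) the support of the distribution of $\omega_{2j}\mapsto\bigl(\zeta(s,\alpha_j,\omega_{2j};\mathfrak{B}_{j1}),\dots,\zeta(s,\alpha_j,\omega_{2j};\mathfrak{B}_{jl(j)})\bigr)$ in $H(D_2)^{l(j)}$, for each $j$.

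For (a): the claim is that this support is $S_\varphi$, i.e.\ all functions in $H(D_1)$ that are either non-vanishing or identically zero. This is precisely the statement for the Steuding class $\widetilde S$, and it is the content (in the relevant half-plane $D(\sigma^*,1)$) of the support computation underlying the single-function universality of $\varphi$; I would cite \cite[Section~3]{RK-KM-15} or the corresponding result in \cite{JSt-07}, where the Euler-product structure (condition (iv)) together with the $|a(p)|^2$ mean condition (v) forces the non-vanishing-or-zero dichotomy. For (b): the claim is that this support is all of $H(D_2)^{l(j)}$. This is exactly where the rank hypothesis $\operatorname{rank}B_j=l(j)$ enters: it guarantees that the $l(j)$ periodic Hurwitz zeta-functions with the \emph{same} shift parameter $\alpha_j$ are ``jointly free'' enough that the random vector of their values fills $H(D_2)^{l(j)}$. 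I would follow the argument of \cite[Theorem~3]{AL-SS-09a} (the matrix-free joint universality among periodic Hurwitz zeta-functions, where the matrix condition appears precisely to handle repeated $\alpha_j$), reducing to a denseness statement for a Dirichlet-type series via the Pechersky/Montgomery-type rearrangement criterion in $H(D_2)$, and using that the rows of $B_j$ having full column rank lets one solve the requisite linear system to approximate an arbitrary target tuple. The independence across distinct $j$ --- hence the product form for $\Omega_{21}\times\cdots\times\Omega_{2r}$ --- is automatic from the product Haar measure, and no arithmetic relation between the $\alpha_j$ is needed at this stage (the algebraic independence of the $\alpha_j$ was already used in Lemma~\ref{thm-rk} to get the limit theorem, not here).

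The main obstacle is step (b) with $l(j)>1$: establishing that the joint distribution of $\bigl(\zeta(s,\alpha_j,\omega_{2j};\mathfrak{B}_{j1}),\dots,\zeta(s,\alpha_j,\omega_{2j};\mathfrak{B}_{jl(j)})\bigr)$ has full support $H(D_2)^{l(j)}$. One must show that the closure of the set of sums $\sum_{m=0}^{\infty} b_{mjl}\omega_{2j}(m)(m+\alpha_j)^{-s}$, as a vector in $l$, ranges over all of $H(D_2)^{l(j)}$; the difficulty is that all $l(j)$ series share the \emph{same} random variables $\omega_{2j}(m)$, so one cannot treat the components independently, and the key is to exploit $\operatorname{rank}B_j=l(j)$ to decouple them after a suitable finite linear change of basis on the index blocks modulo $k_j$. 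Once (a) and (b) are in hand, assembling the product $S_\varphi\times H(D_2)^{\lambda}$ and invoking the product-support principle is routine.
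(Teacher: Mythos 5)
Your plan follows essentially the same route as the paper, which proves this lemma by direct analogy with \cite[Theorem~5]{JG-RM-SR-DS-10}: the product-Haar structure reduces the support to the product of the marginal supports, the Euler-product factor (conditions (iv)--(v) of $\widetilde S$) gives $S_\varphi$, and the condition ${\rm rank}B_j=l(j)$ yields full support $H(D_2)^{l(j)}$ for each block of periodic Hurwitz zeta-functions sharing the parameter $\alpha_j$. One small caveat on your parenthetical remark: the transcendence of each individual $\alpha_j$ (a consequence of the algebraic independence hypothesis) is still needed in your step (b), since the denseness lemma for the series $\sum_{m}b_{mjl}a_m(m+\alpha_j)^{-s}$ with unimodular $a_m$ rests on the linear independence over $\mathbb{Q}$ of the set $\{\log(m+\alpha_j):m\in\mathbb{N}_0\}$.
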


This can be shown just analogously to \cite[Theorem 5]{JG-RM-SR-DS-10}.   Then
Theorem \ref{km-thm-3} follows from Lemmas \ref{thm-rk} and \ref{lem-support}
by the standard argument; see again \cite{JG-RM-SR-DS-10}.

Next, as a generalization of Lemma \ref{km-lem-2}, we can show
that the image of the map
$u: {\mathbb{R}} \to {\mathbb{C}}^{(\lambda+1)N}$ by the formula
\begin{align*}
&u(t)=\biggl(\varphi(\sigma+it),\varphi'(\sigma+it),\ldots,\varphi^{(N-1)}(\sigma+it),\\
& \zeta(\sigma+it,\alpha_1;{\mathfrak{B}}_{11}),\zeta'(\sigma+it,\alpha_1;
{\mathfrak{B}}_{11}),\ldots,\zeta^{(N-1)}(\sigma+it,\alpha_1;{\mathfrak{B}}_{11}),\ldots,\\
&\zeta(\sigma+it,\alpha_1;{\mathfrak{B}}_{1l(1)}),\zeta'(\sigma+it,\alpha_1;
{\mathfrak{B}}_{1l(1)}),\ldots,\zeta^{(N-1)}(\sigma+it,\alpha_1;{\mathfrak{B}}_{1l(1)}),
\ldots,\\
&\zeta(\sigma+it,\alpha_r;{\mathfrak{B}}_{r1}),\zeta'(\sigma+it,\alpha_r;
{\mathfrak{B}}_{r1}),\ldots,\zeta^{(N-1)}(\sigma+it,\alpha_r;{\mathfrak{B}}_{r1}),\ldots,\\
&\zeta(\sigma+it,\alpha_r;{\mathfrak{B}}_{rl(r)}),\zeta'(\sigma+it,\alpha_r;
{\mathfrak{B}}_{rl(r)}),\ldots,\zeta^{(N-1)}(\sigma+it,\alpha_r;{\mathfrak{B}}_{rl(r)})
\biggr)
\end{align*}
is dense in $\mathbb{C}^{(\lambda+1)N}$.    Using this denseness result, similar to
the proof of Theorem \ref{km-thm-1}, we obtain the assertion of
Theorem \ref{km-thm-2}.

\section*{Appendix}

Here we give a comment on \cite{KM-90}.    On p.179 of \cite{KM-90}, it is mentioned
that the Euler product \eqref{km-eq-1} is convergent absolutely, under the condition
\eqref{km-eq-2}, in the region $\sigma>\alpha+\beta+1$.   This can be seen by
the estimate
\begin{align*}
\sum_{m=1}^{\infty}\sum_{j=1}^{g(m)}|a_m^{(j)}|p_m^{-\sigma f(j,m)}
\leq \sum_{m=1}^{\infty}\sum_{j=1}^{g(m)}p_m^{\beta-\sigma}
\ll \sum_{m=1}^{\infty}p_m^{\alpha+\beta-\sigma},
\end{align*}
and hence the Dirichlet series expansion \eqref{km-eq-2.5} is also valid in that
region.

In the same page of \cite{KM-90}, the estimate
$\widetilde{c}_k=O(k^{\alpha+\beta})$ is also stated.    However this is to be
amended as follows.   From \eqref{km-eq-1} we have
$$
\widetilde{\varphi}(s)=\prod_{m=1}^{\infty}\sum_{l=0}^{\infty}
\left({\sum}^*(a_m^{(1)})^{h_1}\cdots(a_m^{(g(m))})^{h_{g(m)}}\right)p_m^{-ls},
$$
where ${\sum}^*$ means the summation over all tuples $(h_1,\ldots,h_{g(m)})$ of
non-negative integers satisfying
$$
h_1f(1,m)+\ldots+h_{g(m)}f(g(m),m)=l.
$$
Denote by $C(m,l)$ the number of such tuples.   Using \eqref{km-eq-2} we obtain
$$
{\sum}^*(a_m^{(1)})^{h_1}\cdots(a_m^{(g(m))})^{h_{g(m)}}
\leq {\sum}^* p_m^{(h_1+\ldots+h_{g(m)})\beta}
\leq p_m^{l\beta}C(m,l).
$$
To estimate $C(m,l)$, it suffices to consider the case when
$f(1,m)=\ldots=f(g(m),m)=1$, and in this case
$$
C(m,l)=\binom{g(m)+l-1}{l}\leq g(m)^l\leq (C_1 p_m^{\alpha})^l.
$$
Therefore ${\sum}^*\leq C_1^l p_m^{(\alpha+\beta)l}$, which yields, if
$k=p_1^{l_1}\ldots p_r^{l_r}$,
$$
\widetilde{c}_k\leq C_1^{l_1+\ldots+l_r}(p_1^{l_1}\ldots p_r^{l_r})^{\alpha+\beta}
=C_1^{\Omega(k)}k^{\alpha+\beta},
$$
where $\Omega(k)$ denotes the total number of prime divisors of $k$.

For any $\varepsilon>0$, we see that
$C_1 p_m^{\alpha}\leq p_m^{\alpha+\varepsilon}$ if $m$ is sufficiently large,
and then ${\sum}^*\leq (p_m^{\alpha+\beta+\varepsilon})^l$.
This implies $\widetilde{c}_k=O(k^{\alpha+\beta+\varepsilon})$, if all prime factors
of $k$ are large.


\end{document}